\newtheorem{Th}{Theorem}
\newtheorem{Df}{Definition}
\newtheorem{Rem}{Remark}
\newtheorem{No}{Notation}
\newtheorem{Prp}{Proposition}
\newcounter{saveeqn}
\newcommand{\alpheqn}{\setcounter{saveeqn}{\value{equation}} \stepcounter{saveeqn} \setcounter{equation}{0}   \renewcommand{\theequation}{\mbox{\arabic{section}.\arabic{equation}}}}
\begin{document}

\begin{center}
{\textbf{\large Theory of two-parameter Markov chain with an application in  warranty study.}}
\vskip 3mm
\end{center}

\noindent
\vskip 3mm

\noindent \textsc{\'Alvaro Calvache}

\noindent \textit{\small Universidad de los Andes}  \, and \,  \textit{\small Universidad Pedag\'ogica y Tecnol\'ogica de Colombia. Colombia.}

\noindent \texttt{\footnotesize a.calvache402@uniandes.edu.co }

\vskip 5mm
\noindent \textsc{Arunachalam Viswanathan}

\noindent \textit{Universidad de los Andes. Bogot\'a, Colombia.}

\noindent \texttt{\footnotesize aviswana@uniandes.edu.co }
\vskip 3mm
\noindent \textbf{ABSTRACT:} In this paper we  present the classical results of Kolmogorov's backward and forward equations to the case of a two-parameter Markov process.  These equations relates the infinitesimal transition matrix of the two-parameter Markov process.  However, solving these equations is not possible and we require a numerical procedure. In this paper, we give an alternative method by use of double Laplace transform of the transition probability matrix and of the infinitesimal transition matrix of the process. An illustrative example is presented for the method proposed. In this example, we consider a two-parameter warranty model, in which a system can be any of these states: working, failure. We calculate the transition density matrix of these states and also the cost of the warranty for the proposed model.
\vskip 3mm
\noindent \textbf{Key Words:} Markov chain homogeneous with continuous two-dimensional parameter, the infinitesimal transition matrix, Kolmogorov equations for homogeneous two-parameter Markov chain.
\vskip 4mm

\alpheqn
\section{Introduction}
\setcounter{equation}{0}
One of the most important elements when we develop the theory about Markov processes with continuous time parameter and an enumerable space states, is the characterization that they have with their infinitesimal transition matrix. It is possible to find the matrix of transition's probabilities trough the system of differential equations of Kolmogorov. The main purpose of this work is to extend the concepts and results to the case of Markov processes with continuous two-dimensional parameters (which are usually the time and usage) and with an enumerable space states. For this purpose it is necessary to start introducing a concept similar to the infinitesimal transitions rates between states, which are defined in the case of Markov processes with a continuous-time parameter. In the case of continuous two dimensional parameters, the definition of the infinitesimal transition rates is that, in this case, they are not the derivatives of the transition's probabilities at time zero. Here it is required to work with the second derivatives of the transition's probabilities with respect to each of the parameters involved in the two-dimensional point $(0,0)$. Then, with these infinitesimal transition rates, new equations are proposed as backward and forward Kolmogorov equations.

The partial differential equations that are generated, in general, are not easy to solve, we use the double Laplace transform to present a simple result. This result however is not easy to invert. So, it requires the use of numerical methods for its inversion. After that, we present a result to find the distributions of the waiting region and these distributions can be used to calculate costs for a previously chosen warranty policy.

\section{Preliminaries}
\setcounter{equation}{0}
\begin{Df}[MCCTP]\normalfont
 A two-parameter stochastic process $\{X(t,u)\mid (t,u) \in [0,\infty )^2\}$ with discrete state space $S \subset \mathbb{Z}^+$, is a Markov Chain with continuous two-dimensional parameter (MCCTP), if for all $i,j,i_0,\ldots , i_k\in S$,\,for all  $t_0<t_1<\cdots <t_k<s<s+t$
 and for all $ u_0<u_1<\cdots <u_k<w<w+u$, the following equality holds:
\begin{multline}\label{c4MCCTP}\small
P \left\{X(t+s,u+w)=j \mid X(s,w)=i, X(t_0,u_0)=i_0, \ldots, X(t_k,u_k)=i_k  \right\}\,\, =\\
P \left\{X(t+s,u+w)=j \mid X(s,w)=i \right\},
\end{multline}
\end{Df}
\begin{Df}[MCHCTP]\normalfont
The Markov Chain with continuous two dimensional parameter $\left\{X(t,u)\mid (t,u) \in [0, \infty]^2 \right\}$  is homogeneous (MCHCTP), if
\begin{multline}\label{c4Homogeneous}
p_{ij}(t,u):=P\left\{X(s+t,w+u)=j\mid X(s,w)=i \right\}=\\P\left\{X(t,u)=j\mid X(0,0)=i \right\},\, \forall_{s,w \ge 0}
\end{multline}
\end{Df}

\begin{Rem}\normalfont
Note that
\begin{equation}\label{c4FProb}
0 \le p_{ij}(t,u) \le 1, \,\,\forall_{i,j \in S}, \forall_{t,u \ge 0} \quad \text{ and }\quad \sum_{j \in S}p_{ij}(t,u)=1,\,\,,\forall_{i \in S}, \forall_{t,u \ge 0}
\end{equation}
\end{Rem}
\begin{Rem}\normalfont
Let $t,u \geq 0$ and $j \in S$, then,
\begin{align}
\pi_j(t,u) & := P \left\{ X(t,u)=j \right\}\notag\\
& = \sum_{i \in S} P\left\{ X(t,u)=j,\,\,X(0,0)=i\right\}\notag \\
& = \sum_{i \in S} P\left\{ X(t,u)=j\mid X(0,0)=i\right\}\cdot P\left\{ X(0,0)=i\right\} \notag \\
\text{So, \quad \quad} \pi_j(t,u) & =  \sum_{i \in S} p_{ij}(t,u)\cdot \pi_i(0,0)\label{c4Resp_j}
\end{align}
Also,{\begin{equation}
 \sum_{j \in S} \pi_j(t,u) =  1
\end{equation}}
\end{Rem}
\begin{Df}[The transition probability matrix]\normalfont
Let $t,u \ge 0$, let us definite the transition probability matrix
by $\mathbf{P} (t,u) =\Big( p_{ij}(t,u)\Big)_{i,j \in S}$ \,\, and the probability row vector by \,\,
 \mbox{\Large $\mathbf{\pi}$}$(t,u)=\Big( \pi_{j}(t,u)\Big)_{j \in
 S}$. Then, \eqref{c4Resp_j}  can be written as:
\begin{equation}\mbox{\Large $\mathbf{\pi}$}(t,u)=\mbox{\Large
$\mathbf{\pi}$}(0,0)\cdot \mathbf{P}(t,u).\end{equation}
\end{Df}
\begin{Rem}\normalfont
Note that $\mathbf{P}(0,0)=\mathbf{I}$, where $\mathbf{I}$ is of identity matrix.
\end{Rem}
\begin{Df}[Initial probability vector]\normalfont The vector \mbox{\Large $\mathbf{\pi}$}$(0,0)$ is called the initial probability vector.\end{Df}

\section{The waiting region for a change of state}
\setcounter{equation}{0}
\begin{Df}[The waiting region]\normalfont
Suppose that $\{X(t,u)\}$ is a MCHCTP and that
in the point $(t_0,u_0)=(0,0)$, the state of the process
$X(t_0,u_0)=X(0,0)=i$, is known. The time-use taken for a change
of state from state $i$, is a random vector, say
$(\tau_i,\gamma_i)$ which is called the \textit{waiting region for
a change of state from state } $i$.
\end{Df}
\begin{No}[cdf]\normalfont
If $i \in S$, and $t,u\ge 0$, we write the cumulative distribution function of the waiting region $(\tau_i,\gamma_i)$ as:
$$F_i(t,u):=P\big(\tau_i\le t, \gamma_i\le u\mid X(0,0)=0\big).$$
Also, we write
$$\overline{F}_i(t,u):=P\big(\tau_i> t, \gamma_i> u\mid X(0,0)=0\big).$$
\end{No}
\begin{Prp}\label{c4LM1}
Suppose that {X(t; u)} is a MCHCTP. If $i \in S$, and $s,t,w,u \ge 0$, then,
\begin{equation}\label{c4FunctionRProperty}
\overline{F}_i(s+t,w+u)=\overline{F}_i(t,u)\cdot \overline{F}_i(s,w)
\end{equation}
\end{Prp}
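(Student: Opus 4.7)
The plan is to mimic the classical one-parameter derivation of the lack-of-memory property, now pushed through via the two-parameter Markov property \eqref{c4MCCTP} and the homogeneity property \eqref{c4Homogeneous}. The driving observation is that the event $\{\tau_i>s+t,\gamma_i>w+u\}$ is contained in $\{\tau_i>s,\gamma_i>w\}$, so the joint survival probability may be factored by conditioning on the smaller rectangle.

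First I would write
\[
\overline{F}_i(s+t,w+u)=P\bigl(\tau_i>s+t,\gamma_i>w+u\mid X(0,0)=i\bigr),
\]
insert the (redundant) event $\{\tau_i>s,\gamma_i>w\}$ into the conditioning, and apply the multiplication rule to obtain
\[
\overline{F}_i(s+t,w+u)=\overline{F}_i(s,w)\cdot P\bigl(\tau_i>s+t,\gamma_i>w+u\mid \tau_i>s,\gamma_i>w,\,X(0,0)=i\bigr).
\]
On the conditioning event the process has remained in state $i$ throughout the rectangle $[0,s]\times[0,w]$; in particular $X(s,w)=i$. Applying \eqref{c4MCCTP} to any finite family of prior observation points $(t_k,u_k)\le(s,w)$ at which $X=i$, the conditional probability above collapses to $P\bigl(\tau_i>s+t,\gamma_i>w+u\mid X(s,w)=i\bigr)$.

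Next I would invoke homogeneity \eqref{c4Homogeneous}: translating the origin from $(0,0)$ to $(s,w)$ converts the residual waiting-region event $\{\tau_i>s+t,\gamma_i>w+u\}$ into $\{\tau_i>t,\gamma_i>u\}$ measured from a fresh start in state $i$, so the conditional probability equals $\overline{F}_i(t,u)$. Multiplying through recovers \eqref{c4FunctionRProperty}.

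The only genuinely delicate point is the step in which the Markov property \eqref{c4MCCTP}, stated for finitely many observation points, is used to collapse the uncountable conditioning event $\{\tau_i>s,\gamma_i>w\}$ to the single-point condition $\{X(s,w)=i\}$. I would handle this by approximating the rectangle $[0,s]\times[0,w]$ by a countable dense net, invoking \eqref{c4MCCTP} on the net, and passing to a monotone limit, implicitly assuming enough path regularity (right-continuity or separability) that \emph{in state $i$ on the net} is equivalent to \emph{in state $i$ throughout the rectangle}. With that regularity granted, the remainder is a direct transcription of the classical one-parameter argument.
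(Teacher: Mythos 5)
Your proposal follows essentially the same route as the paper's proof: factor the survival probability by inserting the redundant conditioning event $\{\tau_i>s,\gamma_i>w\}$ and applying the multiplication rule, then use the Markov property and homogeneity to identify the remaining conditional factor with $\overline{F}_i(t,u)$. The only difference is that you explicitly flag (and sketch a patch for) the delicate step of collapsing the conditioning event to $\{X(s,w)=i\}$, a step the paper carries out by simply invoking homogeneity.
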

\begin{proof}
First of all, let us realize that
\begin{multline}\label{c4PrimeraWait}
 P \left\{ \tau_i > s+t, \gamma_i >w+u \mid X(0,0)=i\right\}\\
= P \left\{ \tau_i > s+t, \gamma_i >w+u \mid X(0,0)=i, \tau_i > s, \gamma_i > w\right\}\cdot\\ P \left\{ \tau_i > s, \gamma_i >w \mid X(0,0)=i\right\}
\end{multline}
Now, since $\{X(t,u)\}$ is homogeneous, then
\begin{align}\label{c4SegundaWait}
\begin{split}
 & P \left\{ \tau_i > s+t, \gamma_i >w+u \mid X(0,0)=i, \tau_i > s, \gamma_i > w\right\} \\
 = &  P \left\{ \tau_i > s+t, \gamma_i >w+u \mid X(s,w)=i\right\}\\
 = &  P \left\{ \tau_i > s+t, \gamma_i >w+u , X(s+t,w+u)=i\mid X(s,w)=i \right\}\\
  = &  P \left\{ \tau_i > t, \gamma_i >u , X(t,u)=i\mid X(0,0)=i \right\}\\
  = &  P \left\{ \tau_i > t, \gamma_i >u \mid X(0,0)=i\right\}
  \end{split}
\end{align}

then by \eqref{c4SegundaWait}, the equality \eqref{c4PrimeraWait} can be written as \eqref{c4FunctionRProperty}.
\end{proof}
\begin{Rem}\label{c4CHexp}\normalfont
In the paper \cite{ma} of Marshall and Olkin, we may see that if $X$ and $Y$ are two random variables, such that \begin{equation}\label{c4LM}P(X>s+t,Y>w+u)=P(X>t,Y>u)\cdot P(X>s,Y>w)\,\end{equation}  for all $s,t,w,u \ge 0$,
then:
\begin{equation}
P(X>t,Y>u)= \exp \Big\{-\lambda_1\,t-\lambda_2\,u\Big\},\qquad u,t\ge 0
\end{equation}
for some $\lambda_1,\,\lambda_2\, > 0$.
\end{Rem}
\begin{proof}
In univariate distribution theory is known that if $Y$ is a positive random variable, then $Y\sim \exp(\lambda)$, for some $\lambda >0$, is equivalent to \begin{equation}\label{c4CharExp}\overline{F}_Y(s + t) = \overline{F}_Y(s)\,\overline{F}_Y(t), \text{ for all } s,t\ge 0,\end{equation}where $\overline{F}_Y(s):=P\big(Y > s\big)$.

Now, let $s,t,w,u \ge 0$ and suppose \eqref{c4LM}, then:
\begin{align}\label{c4RX}
\begin{split}
\overline{F}(s,0) & = P\big(X > s, Y >0\big)\\
& = P(X>s)\\
& = \overline{F}_X(s)\end{split}
\end{align}
So,
\begin{align*}
\overline{F}_X(s+t) & =\overline{F}(s+t,0)\quad &\text{ by \eqref{c4RX}},\\
& =\overline{F}(s,0)\,\overline{F}(t,0)\quad & \text{ by \eqref{c4LM}},\\
& =\overline{F}_X(s)\,\overline{F}_X(t)\quad & \text{ by \eqref{c4RX}}.\\
\end{align*}
Therefore, $X\sim \exp (\lambda_1)$, for some $\lambda_1 > 0$. Similarly,
 $\overline{F}(0,u)=\overline{F}_Y(u)$,
$\overline{F}_Y(w+u) =\overline{F}_Y(w)\,\overline{F}_Y(u)$ \quad and $Y\sim \exp (\lambda_2)$,\quad for some  $\lambda_2 > 0$.
Finally,
\begin{align*}
\overline{F}(t,u) & = \overline{F}(t+0,0+u)\\
& = \overline{F}(t,0)\,\overline{F}(0,u) \quad &\text{ by \eqref{c4LM}},\\
& = \overline{F}_X(t)\, \overline{F}_Y(u)\\
& = e^{-\lambda_1\,t - \lambda_2\,u}
\end{align*}
\end{proof}

\begin{Prp}
Suppose that {X(t; u)} is a MCHCTP. If $i \in S$, then
\begin{equation}\label{c4FunctionRProperty1}
\overline{F}_i(t,u)= \exp \Big\{-\lambda_{1i}\,t-\lambda_{2i}\,u\Big\},\qquad t,u\ge 0
\end{equation}
for some $\lambda_{1i},\,\lambda_{2i}\,\ge 0$.
\end{Prp}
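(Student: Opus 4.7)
The plan is essentially to combine the two preceding results: Proposition \ref{c4LM1} shows that $\overline{F}_i$ satisfies the multiplicative functional equation $\overline{F}_i(s+t,w+u)=\overline{F}_i(s,w)\,\overline{F}_i(t,u)$, and Remark \ref{c4CHexp} characterizes every nonnegative bivariate survival function satisfying such an equation as having the exponential form. So the theorem is really a direct application of the Marshall--Olkin type characterization to the random vector $(\tau_i,\gamma_i)$ conditional on $X(0,0)=i$.

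Concretely, first I would invoke Proposition \ref{c4LM1} to guarantee that for every $s,t,w,u\ge 0$,
\begin{equation*}
\overline{F}_i(s+t,\,w+u)=\overline{F}_i(s,w)\,\overline{F}_i(t,u),
\end{equation*}
which is exactly the hypothesis \eqref{c4LM} applied to $X=\tau_i$ and $Y=\gamma_i$ under the conditional law given $X(0,0)=i$. Next I would copy the argument of Remark \ref{c4CHexp} verbatim: set $\overline{F}_i^{(1)}(t):=\overline{F}_i(t,0)$ and $\overline{F}_i^{(2)}(u):=\overline{F}_i(0,u)$, use the multiplicative property in each variable separately to obtain the classical univariate Cauchy equation $\overline{F}_i^{(1)}(s+t)=\overline{F}_i^{(1)}(s)\overline{F}_i^{(1)}(t)$ (and similarly for the second coordinate), and conclude $\overline{F}_i^{(1)}(t)=e^{-\lambda_{1i} t}$, $\overline{F}_i^{(2)}(u)=e^{-\lambda_{2i} u}$ for suitable constants $\lambda_{1i},\lambda_{2i}$. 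A final application of the functional equation with $(s,t,w,u)=(t,0,0,u)$ gives the product form
\begin{equation*}
\overline{F}_i(t,u)=\overline{F}_i(t,0)\,\overline{F}_i(0,u)=e^{-\lambda_{1i}t-\lambda_{2i}u}.
\end{equation*}

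The one genuine subtlety, and the place where I would be most careful, is the range of the parameters. Remark \ref{c4CHexp} is stated with $\lambda_1,\lambda_2>0$ strictly, whereas the statement to be proved only asks for $\lambda_{1i},\lambda_{2i}\ge 0$. I would handle this by observing that the only solutions to the univariate Cauchy equation $g(s+t)=g(s)g(t)$ with $0\le g\le 1$ are of the form $g(t)=e^{-\lambda t}$ with $\lambda\in[0,\infty]$; the value $\lambda=0$ corresponds to the degenerate case where $\tau_i=\infty$ almost surely (i.e.\ state $i$ is absorbing in the $t$-direction), which is legitimate for a MCHCTP, while $\lambda=\infty$ would force $\overline{F}_i(t,u)=0$ for all $t>0$ and would contradict $\overline{F}_i(0,0)=1$ combined with right-continuity of the survival function at $0$. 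Thus $\lambda_{1i},\lambda_{2i}\in[0,\infty)$, matching the claim.

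Two technical points I would want to flag but not belabor: measurability of $\overline{F}_i$ (which is automatic since it is a survival function, hence monotone in each argument, and any monotone solution of the Cauchy exponential equation is automatically of the stated form without invoking the axiom of choice to rule out pathological solutions), and the fact that $\overline{F}_i$ is indeed the joint survival function of a bona fide random vector (so that $\overline{F}_i(t,0)$ is the marginal survival function of $\tau_i$, as used above). Neither of these causes real difficulty, so I expect the whole proof to be essentially a one-paragraph invocation of Proposition \ref{c4LM1} together with Remark \ref{c4CHexp}.
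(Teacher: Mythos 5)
Your proposal is correct and follows exactly the paper's own route: the paper simply states that the result follows immediately from Proposition \ref{c4LM1} together with Remark \ref{c4CHexp}, which is precisely the combination you carry out. Your additional care about reconciling the strict inequality $\lambda_1,\lambda_2>0$ in the remark with the claimed $\lambda_{1i},\lambda_{2i}\ge 0$ is a reasonable refinement that the paper glosses over, but it does not change the argument.
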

By using the proposition \ref{c4LM1} and the remark \ref{c4CHexp}, the result is immediately.
\section{Kolmogorov Equations}
\setcounter{equation}{0}
Next it is going to enunciate and demonstrate the important result of Chapman-Kolmogorov, for the case of Markov chains with two parameters.
\begin{Th}[Chapman Kolmogorov equations]
If $\{X(t,u)\}$ is a  MCHCTP and let be $i,j \in S$, $t,s,w,u \ge 0$, then:
\begin{equation}\label{c4CKe}
p_{ij}(t+s,u+w)=\sum_{k \in S} p_{ik}(t,u)\cdot p_{kj}(s,w)
\end{equation}
or in  matrix  notation,
\begin{equation}\label{c4CKeMatrix}
\mathbf{P}(t+s,u+w)=\mathbf{P}(t,u)\cdot\mathbf{P}(s,w)
\end{equation}
\end{Th}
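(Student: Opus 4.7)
The plan is to mimic the standard one-parameter Chapman--Kolmogorov argument: condition on the state at the intermediate point $(t,u)$, then peel off the conditioning on $X(0,0)=i$ using the Markov property \eqref{c4MCCTP}, and finally convert the resulting conditional probability into $p_{kj}(s,w)$ by homogeneity \eqref{c4Homogeneous}. It suffices to prove the scalar identity \eqref{c4CKe}; the matrix form \eqref{c4CKeMatrix} follows immediately because $\sum_{k\in S} p_{ik}(t,u)\,p_{kj}(s,w)$ is exactly the $(i,j)$ entry of $\mathbf{P}(t,u)\cdot\mathbf{P}(s,w)$.

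First I would start from $p_{ij}(t+s,u+w)=P\{X(t+s,u+w)=j\mid X(0,0)=i\}$ and use the fact that $\{X(t,u)=k\}_{k\in S}$ is a countable partition of the sample space, together with countable additivity of conditional probability, to write
\[
p_{ij}(t+s,u+w)=\sum_{k\in S} P\{X(t+s,u+w)=j,\,X(t,u)=k\mid X(0,0)=i\}.
\]
Next I would factor each summand using the multiplication rule for conditional probability as
\[
P\{X(t+s,u+w)=j\mid X(t,u)=k,\,X(0,0)=i\}\cdot P\{X(t,u)=k\mid X(0,0)=i\}.
\]
The second factor is $p_{ik}(t,u)$ by definition. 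For the first factor I would invoke the MCCTP property \eqref{c4MCCTP} with the ordered chain $(0,0)<(t,u)<(t+s,u+w)$ to drop the conditioning on $X(0,0)=i$, and then apply homogeneity \eqref{c4Homogeneous} to identify the result with $p_{kj}(s,w)$. Substituting back gives \eqref{c4CKe}.

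The main technical point, rather than a genuine obstacle, is verifying that the ordering required by \eqref{c4MCCTP} genuinely holds; this reduces to $s\ge 0$ and $w\ge 0$, which are part of the hypothesis. One should also adopt the usual convention of discarding the (zero-probability) states $k$ with $P\{X(t,u)=k\mid X(0,0)=i\}=0$ so the factorization is well defined; these contribute zero to the sum in either form. With those conventions in place the argument is routine, and \eqref{c4CKeMatrix} is then a direct restatement in matrix language.
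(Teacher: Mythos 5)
Your proposal is correct and follows essentially the same route as the paper: partition on the intermediate state $X(t,u)=k$, factor by the multiplication rule, drop the conditioning on $X(0,0)=i$ via the Markov property \eqref{c4MCCTP}, and convert the first factor to $p_{kj}(s,w)$ by homogeneity \eqref{c4Homogeneous}. Your added remarks about discarding zero-probability intermediate states and checking the ordering hypothesis are minor refinements the paper does not bother with, but they do not change the argument.
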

\begin{proof} We have that,
\begin{align*}
p_{ij}(s+t,w+u)& =P_r\left\{X(s+t,w+u)=j \mid X(0,0)=i \right\}\\
& =\sum_{k \in S}P_r\left\{X(s+t,w+u)=j , X(t,u)=k\mid X(0,0)=i \right\}\\
& =\sum_{k \in S}\mbox{\scriptsize $P_r\left\{X(s+t,w+u)=j \mid  X(t,u)=k, X(0,0)=i \right\}\cdot P_r\left\{X(t,u)=k\mid  X(0,0)=i \right\}$}\\
& =\sum_{k \in S}\mbox{\footnotesize $P_r\left\{X(s+t,w+u)=j \mid  X(t,u)=k\right\}\cdot P_r\left\{X(t,u)=k\mid  X(0,0)=i \right\}$ }\\
 & \text{ \qquad (because $\{X(t,u)\}$ is a Markov chain)}\\
 & =\sum_{k \in S}\mbox{\footnotesize $P_r\left\{X(s,w)=j \mid  X(0,0)=k\right\}\cdot P_r\left\{X(t,u)=k\mid  X(0,0)=i \right\}$ }\\
  & \text{ \qquad (because $\{X(t,u)\}$ is a homogeneous chain)}
\end{align*}
So \eqref{c4CKe} is obtained  and \eqref{c4CKeMatrix} results then immediatly.
\end{proof}
\begin{Df}[Infinitesimal transition between states]\label{c4DefTransitionDensity} \normalfont
If we suppose that for $t=0$ or $u=0$, $\mathbf{P}(t,u)=\mathbf{I}
$, then we define \textit{the infinitesimal transition from state
$i$ to state $j$}, as \,\,
$
a_{ij}=\frac{\,\,\partial\,^2 \,p_{ij}\,\,}{\partial t\, \partial u}\,(0,0)
$. Also, it is defined  \textit{the infinitesimal transition  matrix
} as the matrix $\mathbf{A}=\left(a_{ij}\right)_{i,j \in S}$.
\end{Df}

\begin{Rem}\normalfont
Note that:
\begin{align*}
a_{ij} & =  \frac{\,\,\partial\,^2 \,p_{ij}\,\,}{\partial t\, \partial u}\,(0,0)\\[3mm]
&= \lim_{\overset{\scriptstyle h \to 0^+}{\scriptstyle k \to 0^+}}\frac{1}{h\,k}\big[p_{ij}(h,k)-p_{ij}(h,0)-p_{ij}(0,k)+p_{ij}(0,0)\big]\\[3mm]
\text{So, if } i \ne j, \quad \quad  a_{ij}&=\lim_{\overset{\scriptstyle h \to 0^+}{\scriptstyle k \to 0^+}}\,\frac{p_{ij}(h,k)}{h\,k}\quad \text{and }\quad  a_{ii}=\lim_{\overset{\scriptstyle h \to 0^+}{\scriptstyle k \to 0^+}}\,\frac{p_{ii}(h,k)-1}{h\,k}\label{c42resTransDen}
\end{align*}
These relations show that $a_{ij} \ge 0$ if $i \ne j$, and than $a_{ii} \le 0$.
Therefore,
\begin{align*}
\text{If } i \ne j, \quad \quad & p_{ij}(h,k)=h\,k\,a_{ij}+o(h)\,o(k)\\
\text{and }\quad \quad \quad & p_{ii}(h,k)=1+h\,k\,a_{ij}+o(h)\,o(k)
\end{align*}
Moreover:
\begin{align*}
\sum_{j \in S} a_{ij} & = \sum_{j \in S} \left[ \,\lim_{\overset{\scriptstyle h \to 0^+}{\scriptstyle k \to 0^+}}\frac{p_{ij}(h,k)-p_{ij}(h,0)-p_{ij}(0,k)+p_{ij}(0,0)}{h\,k}\,\right]\\
& = \lim_{\overset{\scriptstyle h \to 0^+}{\scriptstyle k \to 0^+}}\,\frac{1}{h\,k}\,\left[ \, \sum_{j \in S} p_{ij}(h,k)\, -\sum_{j \in S}p_{ij}(h,0)-\, \sum_{j \in S}p_{ij}(0,k)+\, \sum_{j \in S}p_{ij}(0,0)  \,\right]\\
& = \lim_{\overset{\scriptstyle h \to 0^+}{\scriptstyle k \to 0^+}}\,\frac{1}{h\,k}\,\left[ \,1\,-\,1\,-\,1\,+\,1\,\right]=0 \text{ , by \eqref{c4FProb}}
\end{align*}
So,
\begin{equation}\label{c4ResTranDenaii}
a_{ii}=-\sum_{{\overset{\scriptstyle j \in S}{\scriptstyle j \ne i }}}\,a_{ij}, \quad \forall_{ i \in S}.
\end{equation}
\end{Rem}

\begin{Th}[Backward Kolmogorov equations]
If $\{X(t,u)\}$ is a MCHCTP, $i,j \in S$, $t,u \ge 0$, $\mathbf{P}(t,u)=\Big(
p_{ij}(t,u) \Big)_{i,j \in S}$ is the transition probability
matrix and $\mathbf{A}=\Big( a_{ij} \Big)_{i,j \in S}$ is the
infinitesimal transition  matrix, then:
\begin{align}
\frac{\partial^2\,p_{ij}(t,u)}{\partial t\,\partial u\,}& =\sum_{k \in S}\,a_{ik}\,p_{kj}(t,u),\\
\intertext{ or in matrix form:}
\frac{\partial^2\,\mathbf{P}(t,u)}{\partial t\,\partial u\,}&=\mathbf{A}\cdot \mathbf{P}(t,u).\label{c4BKe}
\end{align}
\end{Th}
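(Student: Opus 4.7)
The plan is to derive the backward equation by applying the Chapman--Kolmogorov identity \eqref{c4CKeMatrix} with a small increment factor, and then reading off the mixed limit that defines $a_{ij}$ in Definition \ref{c4DefTransitionDensity}. The strategy mirrors the one-parameter case, with the refinement that the \emph{mixed} second difference (rather than a single first difference) must be used, because the infinitesimal rates are here mixed partials at the origin.

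For $h,k>0$, I would start from the expression
\begin{equation*}
\frac{\partial^{2}p_{ij}(t,u)}{\partial t\,\partial u}
=\lim_{\substack{h\to 0^{+}\\ k\to 0^{+}}}\frac{p_{ij}(t+h,u+k)-p_{ij}(t+h,u)-p_{ij}(t,u+k)+p_{ij}(t,u)}{h\,k},
\end{equation*}
apply Chapman--Kolmogorov to each of the four terms, and use the boundary convention $\mathbf{P}(h,0)=\mathbf{P}(0,k)=\mathbf{I}$ from Definition \ref{c4DefTransitionDensity} to collapse the two middle terms to $p_{ij}(t,u)$. The numerator then reduces to
\begin{equation*}
\sum_{\ell\in S}p_{i\ell}(h,k)\,p_{\ell j}(t,u)-p_{ij}(t,u)
=\sum_{\ell\neq i}p_{i\ell}(h,k)\,p_{\ell j}(t,u)+\bigl(p_{ii}(h,k)-1\bigr)\,p_{ij}(t,u).
\end{equation*}
Dividing by $h\,k$ and invoking the off-diagonal and diagonal limits recorded in the remark following Definition \ref{c4DefTransitionDensity} --- $p_{i\ell}(h,k)/(hk)\to a_{i\ell}$ for $\ell\neq i$ and $(p_{ii}(h,k)-1)/(hk)\to a_{ii}$ --- the scalar identity $\partial^{2}p_{ij}/\partial t\,\partial u=\sum_{\ell\in S}a_{i\ell}\,p_{\ell j}(t,u)$ falls out, and the matrix form \eqref{c4BKe} is immediate.

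The main technical obstacle is the interchange of the limit with the sum over $\ell\in S$: term-by-term convergence is trivial, but one genuinely needs to pass the limit inside an infinite sum. For finite $S$ this is automatic; in general a dominated-convergence argument is required, exploiting the row-sum identity $\sum_{\ell\neq i}p_{i\ell}(h,k)=1-p_{ii}(h,k)$ together with the conservativity relation \eqref{c4ResTranDenaii}. I would carry this out in the spirit of the classical one-parameter proof, imposing the usual regularity (conservative, uniformly bounded rates) on $\mathbf{A}$. The remainder of the derivation is essentially a transcription of the one-parameter argument, with a second difference in place of a single difference.
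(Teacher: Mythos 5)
Your proposal is correct and follows essentially the same route as the paper: both rest on the backward Chapman--Kolmogorov decomposition $p_{ij}(t+h,u+k)=\sum_{\ell\in S}p_{i\ell}(h,k)\,p_{\ell j}(t,u)$ together with the identification of $a_{i\ell}$ as the mixed second difference of $p_{i\ell}$ at the origin; you phrase this with difference quotients, while the paper formally differentiates the Chapman--Kolmogorov identity in $t$ and $u$ via the chain rule and then sets $t=u=0$. Your explicit attention to the limit--sum interchange is in fact more careful than the paper's proof, which silently interchanges differentiation with the infinite sum.
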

\begin{proof}
Recall the Chapman Kolmogorov equation \eqref{c4CKe}, we have that:
\[p_{ij}(t+s,u+w)=\sum_{k \in S} p_{ik}(t,u)\cdot p_{kj}(s,w)\]
If $x(t,s)=t+s$ and $y(u,w)=u+w$; and differenting both sides of the before equation with respect to $t$, it is obtained:
 $$\frac{\partial \,p_{ij}}{\partial \,x}\,(x,y)\cdot \frac{\partial \,x }{\partial \,t }\,(t,s) +  \frac{\partial \,p_{ij}}{\partial \,y}\,(x,y)\cdot \frac{\partial \,y }{\partial \,t }\,(u,w)  =\sum_{k \in S}\frac{\partial \, p_{ik}}{\partial \,t }\,(t,u) \cdot p_{kj}\,(s,w)$$
So, $$\frac{\partial \,p_{ij}}{\partial \,x}\,(x,y)  = \sum_{k \in S}\frac{\partial \, p_{ik}}{\partial \,t }\,(t,u) \cdot p_{kj}\,(s,w)
$$
 Now differenting both sides of the before equation with respect to $u$, it is obtained:
 \small
$$
 \frac{\partial^2 \,p_{ij}}{\partial \,x\,\partial \,x}\,(x,y)\cdot \frac{\partial \,x }{\partial \,u }\,(t,s) +  \frac{\partial ^2\,p_{ij}}{\partial \,y\,\partial \,x}\,(x,y)\cdot \frac{\partial \,y }{\partial \,u}\,(u,w) =\sum_{k \in S}\frac{\partial^2 \, p_{ik}}{\partial \,u\,\partial \,t }\,(t,u) \cdot p_{kj}(s,w)$$\normalsize
 Therefore,$$\frac{\partial ^2\,p_{ij}}{\partial \,y\,\partial \,x}(x,y)  =\sum_{k \in S}\frac{\partial^2 \, p_{ik}}{\partial \,u\,\partial \,t }\,(t,u) \cdot p_{kj}(s,w)
$$
But, taking $t=0$ and $u=0$, then
  \begin{align*}
  \frac{\partial ^2\,p_{ij}}{\partial \,w\,\partial \,s}(s,w) & =\sum_{k \in S}\frac{\partial^2 \, p_{ik}}{\partial \,u\,\partial \,t }\,(0,0) \cdot p_{kj}(s,w)\\
   & = \sum_{k \in S}\,a_{ik} \cdot p_{kj}(s,w)
 \end{align*}
 and thus the theorem has been proved.
\end{proof}
$\\$
Similarly can be proved the next theorem:
\begin{Th}[Forward Kolmogorov equations]
If $\{X(t,u)\}$ is a MCHCTP, $i,j \in S$, $t,u \ge 0$, $\mathbf{P}(t,u)=\Big(
p_{ij}(t,u) \Big)_{i,j \in S}$ is the transition probability
matrix and $\mathbf{A}=\Big( a_{ij} \Big)_{i,j \in S}$ is the
infinitesimal transition  matrix, then
\begin{align}\label{c4FKe}
\frac{\partial^2\,p_{ij}(t,u)}{\partial t\,\partial u\,}&=\sum_{k \in S}\,p_{ik}(t,u)\,a_{kj},\\
\intertext{ or in a matrix form:}
\frac{\partial^2\,\mathbf{P}(t,u)}{\partial t\,\partial u\,}&=\mathbf{P}(t,u)\cdot\mathbf{A}
\end{align}
\end{Th}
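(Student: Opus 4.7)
The plan is to mirror the proof of the Backward equations, swapping the roles of the pairs $(t,u)$ and $(s,w)$ so that the infinitesimal derivatives act on the \emph{second} factor of the Chapman--Kolmogorov decomposition instead of the first. Concretely, I will start again from
\[
p_{ij}(t+s,u+w)=\sum_{k\in S}p_{ik}(t,u)\,p_{kj}(s,w),
\]
set $x(t,s)=t+s$ and $y(u,w)=u+w$, and this time differentiate both sides first with respect to $s$ and then with respect to $w$.

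On the left-hand side, since $\partial x/\partial s=1$, $\partial y/\partial s=0$, $\partial x/\partial w=0$, $\partial y/\partial w=1$, the chain rule collapses $\partial^{2}/\partial w\,\partial s$ of $p_{ij}(x,y)$ to $\partial^{2}p_{ij}/\partial y\,\partial x\,(x,y)$, exactly as in the backward proof. On the right-hand side, the factor $p_{ik}(t,u)$ is independent of $(s,w)$, so the derivative passes through it and yields
\[
\sum_{k\in S}p_{ik}(t,u)\cdot\frac{\partial^{2}p_{kj}}{\partial w\,\partial s}(s,w).
\]

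Next I specialize to $s=0$ and $w=0$. On the left, $x=t$ and $y=u$, so the expression becomes $\partial^{2}p_{ij}/\partial t\,\partial u\,(t,u)$. On the right, the inner mixed derivative evaluated at $(0,0)$ is exactly $a_{kj}$ by Definition \ref{c4DefTransitionDensity}, giving $\partial^{2}p_{ij}/\partial t\,\partial u\,(t,u)=\sum_{k\in S}p_{ik}(t,u)\,a_{kj}$, which is \eqref{c4FKe}. Collecting the scalar identities over $i,j\in S$ produces the matrix form $\partial^{2}\mathbf{P}/\partial t\,\partial u=\mathbf{P}(t,u)\cdot\mathbf{A}$.

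The only genuine subtlety, as in the backward case, is the interchange of the mixed partial with the (possibly infinite) sum over $S$. I would handle it exactly as the backward proof tacitly does: either restrict to finite $S$, or invoke a standard regularity hypothesis on $\mathbf{P}(t,u)$ (uniform convergence of the difference quotients, or dominated convergence applied termwise) so that the mixed partial commutes with $\sum_{k\in S}$. With that justification in place the argument is otherwise a routine symmetric reprise of the backward derivation.
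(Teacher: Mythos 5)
Your argument is exactly the symmetric reprise that the paper intends when it says the forward equations ``similarly can be proved'': you start from Chapman--Kolmogorov, differentiate in $s$ and $w$ instead of $t$ and $u$, and evaluate at $s=w=0$ so that the mixed derivative lands on the second factor and produces $a_{kj}$. The computation is correct, and your remark about justifying the interchange of the mixed partial with the sum over $S$ is a point the paper itself leaves tacit.
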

\begin{No}[Laplace transform.]\normalfont
If $g(x), \,x\ge 0,$ is any function for  which exists its Laplace Transform, then we shall write
$
g^{\ast}(s)=\mathcal{L}(g(x))=\int_0^{\infty} \mbox{\large $\textit{e}$}^{\,-sx}\ g(x)\ dx\quad \text{if} \quad s\ge 0
$
for the \textit{Laplace Transform of} \ $g(x)$. Similarly, if $\mathbf{g}(x)=\Big( g_{ij}(x) \Big)$ is a matrix function for which all its components have Laplace Transform, then we shall write
$
\mathbf{ g}^{\ast}(s)=\mathcal{L}(\mathbf{g}(x))=\Big( {g}^*_{ij}(s) \Big)
$
for the \textit{Laplace Transform of} \ $\mathbf{g} (x)$.\\
If $k(x,y), \,x\ge 0,\, y\ge 0$ is any bivariate function for  which exists its bivariate Laplace Transform, then we shall write
$
k^{\ast \ast}(s_1,s_2)=\mathcal{L}^2(k(x,y))=\int_0^{\infty} \int_0^{\infty} \mbox{\large $\textit{e}$}^{\,-s_1\,x-s_2\,y}\ k(x,y)\ dx\ dy\quad \text{if} \quad s_1,s_2\ge 0
$
for the \textit{bivariate Laplace Transform of} \ $k(x,y)$. Similarly, if $\mathbf{k}(x,y)=\Big( k_{ij}(x,y) \Big)$ is a matrix function for which all its components have Laplace Transform, then we shall write
$
\mathbf{k}^{\ast \ast}(s_1,s_2)=\mathcal{L}^2(\mathbf{k}(x,y)))=\Big( {k}^{\ast \ast}_{ij}(s_1,s_2) \Big)
$
for the \textit{bivariate Laplace Transform of} \ $\mathbf{k} (x,y)$.
\end{No}
Now, we are going to give a relation, between $\mathbf{A}$ and $\mathbf{P}$, by using the double Laplace transform.
\begin{Th}
If $\{X(t,u)\}$ is a MCHCTP, and let be  $s_1,s_2\ge0$, $\mathbf{P}(t,u)=\Big(
p_{ij}(t,u) \Big)_{i,j \in S}$ the transition probability matrix
and $\mathbf{A}=\Big( a_{ij} \Big)_{i,j \in S}$  the infinitesimal
transition matrix, then
\begin{equation}\label{c4KELaplaceT}
\mathbf{P}^{\ast \ast}(s_1,s_2)=\Big(s_1\,s_2\,\mathbf{I}-\mathbf{A}\Big)^{-1}
\end{equation}
where $\mathbf{I}$ is the identity matrix.
\end{Th}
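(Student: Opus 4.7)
The natural approach is to apply the double Laplace transform to the backward Kolmogorov equation \eqref{c4BKe}, and to do this I first need a formula for the double Laplace transform of a mixed second partial derivative. Specifically, for a sufficiently regular function $f(t,u)$, two successive integrations by parts (one in each variable) give
\begin{equation*}
\mathcal{L}^2\!\left[\frac{\partial^2 f}{\partial t\,\partial u}\right](s_1,s_2)
= s_1 s_2\, f^{\ast\ast}(s_1,s_2) - s_1\,\mathcal{L}_t[f(t,0)](s_1) - s_2\,\mathcal{L}_u[f(0,u)](s_2) + f(0,0),
\end{equation*}
which I would derive by writing the double integral as an iterated integral, applying the single-variable identity $\mathcal{L}[g'](s)=s\,\mathcal{L}[g](s)-g(0)$ in each variable in turn, and collecting the boundary contributions.

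Next I would apply this identity entrywise to $\mathbf{P}(t,u)$. The key simplification comes from the standing assumption in Definition~\ref{c4DefTransitionDensity} that $\mathbf{P}(t,0)=\mathbf{P}(0,u)=\mathbf{I}$, which immediately gives $\mathcal{L}_t[p_{ij}(t,0)](s_1)=\delta_{ij}/s_1$ and $\mathcal{L}_u[p_{ij}(0,u)](s_2)=\delta_{ij}/s_2$. Substituting these into the transform formula collapses the three boundary terms to $-\mathbf{I}-\mathbf{I}+\mathbf{I}=-\mathbf{I}$, so that
\begin{equation*}
\mathcal{L}^2\!\left[\frac{\partial^2 \mathbf{P}(t,u)}{\partial t\,\partial u}\right](s_1,s_2)
= s_1 s_2\, \mathbf{P}^{\ast\ast}(s_1,s_2) - \mathbf{I}.
\end{equation*}

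Now I would apply $\mathcal{L}^2$ to both sides of the backward Kolmogorov equation $\partial^2\mathbf{P}/\partial t\,\partial u = \mathbf{A}\,\mathbf{P}$. Since $\mathbf{A}$ is a constant matrix, it pulls outside the integral, giving
\begin{equation*}
s_1 s_2\, \mathbf{P}^{\ast\ast}(s_1,s_2) - \mathbf{I} \;=\; \mathbf{A}\cdot \mathbf{P}^{\ast\ast}(s_1,s_2),
\end{equation*}
which rearranges to $(s_1 s_2\,\mathbf{I}-\mathbf{A})\,\mathbf{P}^{\ast\ast}(s_1,s_2)=\mathbf{I}$, yielding \eqref{c4KELaplaceT} once invertibility of $s_1 s_2\,\mathbf{I}-\mathbf{A}$ is granted.

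The main obstacles are analytical rather than algebraic. First, the boundary-term derivation implicitly assumes we may interchange differentiation and the Laplace integral and that entries of $\mathbf{P}$ decay fast enough at infinity for the evaluated endpoint terms to vanish; this is essentially a regularity/integrability issue that should be addressed (at least by assumption) before the computation. Second, for a countably infinite state space $S$ one must also justify interchanging the Laplace integral with the sum $\sum_{k\in S} a_{ik}\, p_{kj}(t,u)$ appearing in the right-hand side of the backward equation, and finally one needs $s_1 s_2\,\mathbf{I}-\mathbf{A}$ to be invertible for $s_1,s_2>0$, which for finite $S$ is immediate for large enough $s_1 s_2$ since $\mathbf{A}$ has bounded spectrum, but for infinite $S$ requires a bounded-rates hypothesis. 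These are the points where the argument would have to be supplemented with regularity assumptions; the algebraic core, however, is the short computation above.
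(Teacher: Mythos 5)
Your proposal is correct and follows essentially the same route as the paper: both apply the double Laplace transform to the backward Kolmogorov equation \eqref{c4BKe}, use the standing convention $\mathbf{P}(t,0)=\mathbf{P}(0,u)=\mathbf{I}$ to evaluate the boundary terms, and solve the resulting algebraic identity $(s_1 s_2\,\mathbf{I}-\mathbf{A})\,\mathbf{P}^{\ast\ast}=\mathbf{I}$. The only difference is organizational --- the paper transforms in $t$ and then in $u$ sequentially (via the auxiliary matrix $\mathbf{H}=\partial\mathbf{P}/\partial u$) rather than quoting the combined formula for the transform of a mixed partial --- and you explicitly flag the interchange-of-limits and invertibility issues that the paper passes over in silence.
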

\begin{proof}
Let be $t,u \ge 0$. Remember that in definition \ref{c4DefTransitionDensity}  we had supposed that $\mathbf{P}(t,u)=\mathbf{I}$ if $t=0$ or $u=0$. Now, let us call $\mathbf{H}(t,u)=\frac{\partial \mathbf{P}}{\partial u}(t,u)$. then \eqref{c4BKe} can be written as
\[  \frac{\partial \mathbf{H}}{\partial t}(t,u)= \mathbf{A}\cdot \mathbf{P}(t,u)\]
and taking Laplace Transform in both sides, with respect to variable $t$,
\begin{equation}\label{c4Auxiliar1}
 s_1\,\mathbf{H}^*(s_1,u)-\mathbf{H}(0,u)=\mathbf{A}\cdot \mathbf{P}^*(s_1,u)
\end{equation}
But,
\begin{align*}
H(0,u) = \, \frac{\partial \mathbf{P}}{\partial u}\,(0,u) &= \,\lim_{k \to 0}\,\frac{1}{k}\Big[\,\mathbf{P}(0,u+k)\,-\,\mathbf{P}(0,u)\,\Big]\\[4mm]
&= \lim_{k \to 0}\,\frac{1}{k}\Big[\,\mathbf{I}\,-\,\mathbf{I}\,\Big]=\mathbf{0}
\end{align*}
where $\mathbf{0}$ is the null matrix. So \eqref{c4Auxiliar1} can be written as:
\begin{equation}\label{c4Auxiliar2}
 s_1\,\mathbf{H}^*(s_1,u)=\mathbf{A}\cdot \mathbf{P}^*(s_1,u)
\end{equation}
Now, we observe that:
\begin{align*}
\mathbf{H}^*(s_1,u) & = \int_0^{\infty}\,e^{-t\,s_1}\,\mathbf{H}(t,u)\,dt =  \int_0^{\infty}\,e^{-t\,s_1}\,\frac{\partial}{\partial\,u}\mathbf{P}(t,u)\,dt\\[4mm]
 & = \frac{\partial}{\partial\,u}\, \int_0^{\infty}\,e^{-t\,s_1}\,\mathbf{P}(t,u)\,dt = \frac{\partial}{\partial\,u}\,\mathbf{P}^*(s_1,u)
\end{align*}
Then \eqref{c4Auxiliar2} can be written as:
\[s_1\frac{\partial}{\partial\,u}\,\mathbf{P}^*(s_1,u)=\mathbf{A}\cdot \mathbf{P}^*(s_1,u)\]
And taking Laplace Transform in both sides with respect to variable $u$:
\begin{equation}\label{c4Auxiliar3}
s_1\,\Big[ \,s_2\,\mathbf{P}^{\ast \ast}(s_1, s_2)- \mathbf{P}^{\ast}(s_1,0)\,\Big]=\mathbf{A}\cdot \mathbf{P}^{\ast *}(s_1,s_2)
\end{equation}
But,
\begin{align*}
\mathbf{P}^{\ast}(s_1,0) & = \int_0^{\infty} e^{-t\,s_1}\,\mathbf{P}(t,0)\,dt = \int_0^{\infty} e^{-t\,s_1}\,\mathbf{I}\,dt\\
& = \left(\,\int_0^{\infty} e^{-t\,s_1}\,dt\,\right)\cdot \mathbf{I}=\frac{1}{s_1}\cdot \mathbf{I}
\end{align*}
So \eqref{c4Auxiliar3} can be written as:
\[s_1\,\Big[ \,s_2\,\mathbf{P}^{\ast \ast}(s_1, s_2)- \frac{1}{s_1}\cdot \mathbf{I}\,\Big]=\mathbf{A}\cdot \mathbf{P}^{\ast *}(s_1,s_2)\]
and solving this equation for $\mathbf{P}^{\ast \ast}(s_1, s_2)$, we obtain \eqref{c4KELaplaceT}.
\end{proof}
\section{Application}
\setcounter{equation}{0}
A high technology machine to produce juice has several identical components. It can work if at least one of its components is in a good condition. However, just damaged one of its components, it is removed and repaired. When one or more of its components are being repaired, the machine does not allow a full work. Once it is fixed, it is placed back into the machine. In this sense, the probability that all the components simultaneously are damaged, is practically zero. Moreover, not all the time, the machine has the same amount of work. Therefore, the machine has a measurer that records the amount of work done by the machine. For this reason, the warranty policy takes into account both the time since the machine is running, and the amount of work done. For the model that is being analyzed, we suppose that $X(t, u)$ is a MCHCTP and that its states space is $S =\{0, 1\}$.

It is interpreted as follows: the state 1 means that the
machine is working full capacity, that is, all its components are in good condition. The
state 0 means that at least one of the components is being repaired and the machine is
not working full capacity. The parameter t represents the total time (in years) since the
machine was started. The parameter u represents the amount of work (in million of juice liters) that the machine has performed.\\[4mm]
The infinitesimal transition matrix of the process $\{X(t,u)\}$ is $$\mathbf{A}=\begin{bmatrix}
\quad  -2 \quad &  2 \\
\quad \, \, 0.6 \quad &  -0.6 \quad
\end{bmatrix}.$$
\\
Therefore, by \eqref{c4KELaplaceT},\\ $$\mathbf{P}^{\ast \ast}(s_1,s_2)= \frac{1}{s_1s_2(5s_1s_2+13)}\cdot\begin{bmatrix}
5s_1s_2+3 & 10\\
3 & 5(s_1s_2+2)
 \end{bmatrix},$$
$\\$
So, by using the method for to invert the double Laplace Transform that was showed in \cite{mo} by Moorthy, for instance,  we have found the next results, which have relative errors less that the 4\% :\\[4mm]
$$P(0.2,0.6)= \begin{bmatrix}
    \quad 0.7781 \quad   & \quad 0.2219   \quad   \\
    0.0666   & 0.9334
\end{bmatrix}$$
and

$$P(2.0,2.0)= \begin{bmatrix}
    \quad 0.4272\quad & \quad 0.5754 \quad \\
 0.1726 & 0.8300
\end{bmatrix}.$$
In this application, we suppose that the initial probability vector is
 \mbox{\Large $\mathbf{\pi}$}$(0,0)=$ $\big[\,0,\,1\,\big] $
 and then, we obtain that,
 $$ \text{\Large $\mathbf{\pi}$}(0.2,0.6)=\big[\,0.0666,\,0.9334\,\big],$$
 and
 $$\text{\Large $\mathbf{\pi}$}(2.0,2.0)=\big[\,0.1726,\,0.8300\,\big]. $$

Suppose that the state of the process when $t=0$ and $u=0$  is $X(0,0)=i$. Let $(\tau_i,\gamma_i)$ the waiting region for a change of state from state $i$ and let:
\begin{align*}
F(t,u)& =Pr\left(\tau_0 \le t,\,\gamma_0 \le u\right)\\[2mm]
G(t,u)& =Pr\left(\tau_1 \le t,\,\gamma_1 \le u\right)\\[2mm] \overline{F}(t,u)&=Pr\left(\tau_0>t,\,\gamma_0>u\right)\\[2mm] \overline{G}(t,u)&=Pr\left(\tau_1>t,\,\gamma_1>u\right)\\[2mm]
f(t,u)&=\frac{\partial^2 F}{\partial t \partial u}(t,u)
\intertext{and }
g(t,u)&=\frac{\partial^2 G}{\partial t \partial u}(t,u).
\end{align*}
Then, we can write the next integral equations:
\begin{align}
p_{00}(t,u)=\,&\, \overline{F}(t,u)+\int^u_0\int^t_0 \,p_{10}(t-\xi,u-\omega)\,dF(\xi,\omega)\\[3mm]
p_{01}(t,u)=& \hspace{1.8cm}\int^u_0\int^t_0 \,p_{11}(t-\xi,u-\omega)\,dF(\xi,\omega) \label{c4imp1} \\[3mm]
p_{10}(t,u)=& \hspace{1.8cm}\int^u_0\int^t_0 \,p_{00}(t-\xi,u-\omega)\,dG(\xi,\omega) \label{c4imp2} \\[3mm]
p_{11}(t,u)=\,&\, \overline{G}(t,u)+\int^u_0\int^t_0 \,p_{01}(t-\xi,u-\omega)\,dG(\xi,\omega)
\end{align}
By taking double Laplace Transform on \eqref{c4imp1} and \eqref{c4imp2}, we obtain:
\begin{align}
p_{01}^{\ast \ast}(s_1,s_2)=& \, f^{\ast \ast}(s_1,s_2)\,p_{11}^{\ast \ast}(s_1,s_2)\label{c4imp3}\\[3mm]
p_{10}^{\ast \ast}(s_1,s_2)=& \, g^{\ast \ast}(s_1,s_2)\,p_{00}^{\ast \ast}(s_1,s_2)\label{c4imp4}
\end{align}
Since   $\mathbf{P}^{\ast \ast}(s_1,s_2)$ is known already, then  by \eqref{c4imp3} and \eqref{c4imp4}, we deduce that $$f^{\ast \ast}(s_1,s_2)=10/\left(5s_1s_2+10\right) \,\text{ and }\, g^{\ast \ast}(s_1,s_2)=3/\left(5s_1s_2+3\right)$$ and so $$F^{\ast \ast}(s_1,s_2)=10/\left(s_1s_2(5s_1s_2+10)\right) \, \text{ and }\, G^{\ast \ast}(s_1,s_2)=3/\left(s_1s_2(5s_1s_2+3)\right).$$ $\\$
Again, we use \cite{mo} for to invert these double Laplace transforms.\\[4mm]
The warranty conditions are as follows: If damage in one of  the machine components occurs, within the first six months after putting the machine in use and before it produces $200,000$ liters of juice, the machine provider changes the item immediately by a new one whose components have been fully checked prior to installation and they meet the quality requirements (ie better than new). However, if damage occurs outside the above ranges, but during the first year of operation of the machine and before the machine produces $300,000$ liters of juice, the machine provider agrees to make the change of the just damage component immediately and to make a general revision of the machine to state it better  than new. Once one of these  has been done, the machine provider does not offer more warranty service. Suppose that the cost of the machine is $C$ and the cost of the change of one of the components and the general revision is $1=10\,C$. Then, by using the results proposed by Dimitrov et al \cite{di}, the expected warranty expense is:
\begin{align*}
EWE & \,=  C \cdot G(0.5,0.2) + \frac{C}{10}\cdot \left[G(1,0.3)-G(0.5,0.2)\right]\\[4mm]
 & \,= 0.0591\, C + 0.1130 \,\frac{C}{10}= 0.0704\, C
 \end{align*}

\section{Conclusion}
It is known that the Markov chains with a single continuous parameter are characterized by the infinitesimal transition matrix. This study concluded that Markov chains with two continuous parameters are characterized by a matrix, that it's also called "infinitesimal transition matrix", although its structure is different to the case of a single parameter. In addition, this Markov process can be used in a particular type of two-dimensional warranty policies problems.

\end{document}